\DeclareMathOperator*{\freeproduct}{\raisebox{-0.2ex}{\scalebox{1.4}{$\ast$}}}
\theoremstyle{definition}		\newtheorem*{df}{Definition}
\theoremstyle{plain}		\newtheorem{prop}{Proposition}		
				\newtheorem*{lem}{Lemma}
				\newtheorem*{thm}{Theorem}
\theoremstyle{remark}
\begin{document}

\author{D. Gruber}
\address{Universit\"at Wien, Fakult\"at f\"ur Mathematik\\
Oskar-Morgenstern-Platz 1, 1090 Wien, Austria.
}
\email{dominik.gruber@univie.ac.at}
\email{alexandre.martin@univie.ac.at}
\email{markus.steenbock@univie.ac.at}
\author{A. Martin}

\author{M. Steenbock}

\subjclass[2010]{{20F06, 20F67}} \keywords{Graphical small cancellation theory, hyperbolic groups, unique product property.}

\title[Finite index subgroups in graphical small cancellation groups]{Finite index subgroups without unique product in graphical small
cancellation groups}

\begin{abstract}
 We construct torsion-free hyperbolic groups without unique product whose subgroups up to some given finite index are themselves non-unique product groups. This is achieved by generalising a construction of Comerford to graphical small cancellation presentations, showing that for every subgroup $H$ of a graphical small cancellation group there exists a free group $F$ such that $H*F$  admits a graphical small cancellation presentation.
\end{abstract}

\maketitle

The unique product property was introduced as a way to prove Kaplansky's zero-divisor conjecture \cite{kaplansky} on the group ring of a torsion-free group \cite{cohen}. A group $G$ is said to have the \emph{unique product property} if every pair of non-empty finite sets $A,B$ of $G$ admits a \emph{unique product}, that is, if there exists $c\in G$ for which there exist unique elements $a\in A$ and $b\in B$ satisfying $c=ab$. Delzant showed \cite{delzant_sur_1997} that every \emph{residually finite} torsion-free hyperbolic group admits a finite index subgroup with the unique product property. It is still unknown whether every hyperbolic group is residually finite. In light of the above, the existence of a torsion-free hyperbolic group all of whose finite index subgroups are non-unique product would provide an example of a non-residually finite hyperbolic group. In \cite{markus2} Arzhantseva--Steenbock use a version of the Rips construction \cite{rips_subgroups_1982} to produce explicit hyperbolic groups that 
have a non-unique product subgroup of some given finite index. They ask whether there exist torsion-free hyperbolic groups all of whose subgroups \emph{up to} some given finite index are non-unique product groups. In this note, we answer this question in the positive.

The first torsion-free groups without the unique product property are due to Rips--Segev \cite{rips_torsion-free_1987}. Such groups can be realised as finitely presented graphical small cancellation groups over a free product of torsion-free hyperbolic groups \cite{markus}, thus providing the first examples of torsion-free \emph{hyperbolic} non-unique product groups. Very little is known about their residual properties or the properties of their subgroups.
We construct Rips--Segev groups which have many finite index subgroups without the unique product property. More precisely, we prove the following:

\begin{thm} Let $k\geqslant 1$ be an integer. There exists a torsion-free hyperbolic group $G$ without the unique product property such that for all $1 \leqslant h\leqslant k$:
\begin{enumerate}
 \item there exists a subgroup
 of index $h$;
 \item every subgroup of index $h$ is a non-unique product group.
 \end{enumerate} 
\end{thm}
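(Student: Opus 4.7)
The plan is to combine the paper's main technical result (the graphical Comerford-type theorem) with two standard closure properties of the unique product property. Let $\mathcal{U}$ denote the class of groups with the unique product property. Then $\mathcal{U}$ is closed under subgroups (trivially) and under free products (Strojnowski), and since free groups lie in $\mathcal{U}$, for any free group $F$ the non-membership $H*F\notin\mathcal{U}$ forces $H\notin\mathcal{U}$. Hence, to prove part (2) it suffices to show that $H*F$ is non-unique product for the free group $F$ produced by the technical result.

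To construct the group $G$, fix $k\geqslant 1$ and, following the graphical realization of Rips--Segev groups in \cite{markus}, choose a labeled graph $\Gamma$ over a suitable alphabet such that (i) $\Gamma$ satisfies the graphical $\mathit{Gr}'(1/6)$ condition; (ii) $\Gamma$ contains a Rips--Segev non-unique product witness as a labeled subgraph, so that $G:=G(\Gamma)$ is torsion-free, hyperbolic, and non-unique product; and (iii) by adjusting the labeling or adding controlled extra components, $G$ admits a surjection onto some finite group having subgroups of every index $1,\ldots,k$ (for instance onto $\mathfrak{S}_k$ or onto $\mathbb{Z}/k!\mathbb{Z}$). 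Part (1) then follows by pulling back an index-$h$ subgroup through this surjection.

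For part (2), fix $H\leqslant G$ of index $h\leqslant k$ and apply the main technical result to obtain a free group $F$ and a labeled graph $\Gamma_H$, again satisfying $\mathit{Gr}'(1/6)$, with $H*F=G(\Gamma_H)$. As in Comerford's original argument, $\Gamma_H$ is built by lifting $\Gamma$ along the Schreier coset graph of the preimage of $H$ in the ambient free group, so that each component of $\Gamma_H$ is a finite cover of a component of $\Gamma$. Consequently the Rips--Segev witnessing subgraph in $\Gamma$ lifts to a labeled subgraph of $\Gamma_H$ (possibly after replacing the witnessing elements with suitable conjugates or powers reflecting the coset structure), which encodes a non-unique product configuration in $H*F$. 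Therefore $H*F$ is non-unique product, and by the closure properties above so is $H$.

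The principal obstacle is the generalized Comerford theorem itself, which is the technical heart of the paper. In the classical $C'(1/6)$ setting the relators are words and Schreier rewriting yields an explicit $C'(1/6)$ presentation of any finite-index subgroup, up to a free factor. In the graphical setting the relators are entire labeled graphs, so verifying that the Schreier-lifted $\Gamma_H$ still satisfies the graphical $\mathit{Gr}'(1/6)$ condition demands careful control of how pieces behave across distinct sheets of the finite cover and of the interaction between labels and the coset structure. A secondary, but essential, point is to check that the Rips--Segev witness lifts in such a way that a genuine non-unique product configuration appears in $H*F$; this requires a concrete description of how the construction acts on the labeled closed paths encoding the Rips--Segev witness.
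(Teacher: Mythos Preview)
Your overall architecture matches the paper's: build $G$ as a graphical Rips--Segev group, use the graphical Comerford theorem to present $H*F_{h-1}$ as $G(\Gamma_H)$, show $G(\Gamma_H)$ is non-unique product, and conclude via closure of unique product under free products. However, there is a genuine gap at the step where you claim the Rips--Segev witness ``lifts'' to $\Gamma_H$.

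First, a correction about the Comerford construction: in the paper's version, each component $\Gamma_v$ of $\Gamma_H$ is \emph{not} a finite cover of $\Gamma$; it is a copy of $\Gamma$ as an unlabelled oriented graph, equipped with a new labelling over the enlarged alphabet $S\times(G/H)$ obtained by lifting the labelling map $\Gamma\to K$ through the Schreier graph $K_H\to K$. The question is whether this relabelled copy is again a Rips--Segev graph in the sense required by Proposition~\ref{mainRipsSegev}. In general it is not: if $a=s$, then a path in $\Gamma$ labelled $a^{C_i}=s^{C_i}$ lifts to a path whose label is a word in several distinct letters $s_{v_0},s_{v_1},\ldots$ following the $s$-cycle through $v$ in the Schreier graph, and this is typically not a power of a single element of $F(\{s\}\times G/H)$. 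The resulting $\Gamma_v$ then fails to have the combinatorial shape of a Rips--Segev graph, and Proposition~\ref{mainRipsSegev} does not apply.

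The paper resolves this by a specific choice that you do not make: take $a=s^{k!}$ and $b=t^{k!}$. Since any $s$-cycle in the Schreier graph of an index-$h$ subgroup with $h\leqslant k$ has length dividing $k!$, the lift of a path labelled $a$ starting at the appropriate vertex is labelled by $a_v:=\omega_H(\alpha_v)^{k!/|\alpha_v|}$, a genuine power of a single cyclically reduced word. With this choice each $\Gamma_v$ is precisely the Rips--Segev graph for the coefficient system $(a_v,b_v,N,(C_i),(N_{i,j}),(P_{i,j}))$, and Proposition~\ref{mainRipsSegev} applies directly. This same choice also yields the surjections onto finite quotients needed for part~(1), so your suggestion of ``adding controlled extra components'' is unnecessary and would in fact create new difficulties, since those components would also have to lift to Rips--Segev graphs. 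Finally, note that the relevant small cancellation condition throughout is $Gr'_*(\tfrac{1}{6})$ with respect to the free product length, not the word-length $Gr'(\tfrac{1}{6})$; this is what Proposition~\ref{mainRipsSegev} requires and what the Comerford theorem preserves.
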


Our proof together with \cite{markus} can be used to construct an explicit presentation of $G$. In the course of our proof, we provide 
 a generalisation of a construction of Comerford \cite{comerford_subgroups_1978} to graphical small cancellation presentations, which is of independent interest.
Given a small cancellation presentation of a group $G$ and an index $h$ subgroup $H$, it provides an explicit small cancellation presentation for $H*F_{h-1}$, where $F_{h-1}$ is the free group of rank $h-1$.

 \medskip
 
\noindent
{\bf Acknowledgments.} We thank Goulnara Arzhantseva for encouraging us to write this note. D. Gruber and A. Martin are supported by the ERC grant ANALYTIC no.\  259527 of G. Arzhantseva. M. Steenbock is recipient of the DOC fellowship of the Austrian Academy of Sciences and is partially supported by the ERC grant ANALYTIC no.\  259527 of G. Arzhantseva.

\section{Comerford construction for graphical small cancellation}

We extend the aforementioned construction of Comerford \cite{comerford_subgroups_1978} for classical small cancellation presentations \cite{lyndon_combinatorial_1977} to graphical small cancellation presentations as considered in \cites{ollivier_small_2006,dominik,markus}. 

Let $\Gamma$ be a graph. A \emph{labelling} of $\Gamma$ by a set $S$ is a choice of orientation on each edge and a map assigning to each edge an element of $S$, called \emph{label}. Given an 
edge-path $p$ on $\Gamma$, the \emph{label of $p$}, denoted $\omega(p)$ is the product of the labels of the edges traversed by $p$ in the free monoind on $S\sqcup S^{-1}$. Here a letter is given exponent $+1$ if the corresponding edge is traversed in its direction and exponent $-1$ if it is traversed in the opposite direction. A graph labelled by a set $S$ defines a group $G(\Gamma)$ given by the following presentation:
$$\langle S\mid\Gamma\rangle:=\langle S\mid\text{labels of simple closed paths on }\Gamma\rangle.$$

A labelling is \emph{reduced} if the labels of immersed 
paths are freely reduced words. A \emph{piece} with respect to a labelled graph $\Gamma$ is a labelled path $p$ that has two \emph{essentially distinct} immersions $\iota_1,\iota_2:p\to \Gamma$. Here essentially distinct means that there does not exist a label-preserving automorphism $\phi:\Gamma\to\Gamma$ such that $\iota_1=\phi\circ\iota_2$.

\begin{df}
 A labelled graph $\Gamma$ satisfies the \emph{$Gr(p)$ small cancellation condition} for $p\in\mathbb N$  if
\begin{itemize}
\item the labelling is reduced and
\item no nontrivial simple closed path is the concatenation of fewer than $p$ pieces.
\end{itemize}
\end{df}

Let $S$ be a set, and denote by $F(S)$ the free group on $S$. Let $\ell$ be a length function on $F(S)$. Examples of such length functions are:
The \emph{word length}, which counts the number of generators in a reduced word. 
The \emph{free product length} (or syllable length \cite{lyndon_combinatorial_1977}*{Ch.V.9}). Given a partition $\Pi$ of $S$, the free product length counts the number of factors in the normal form of an element with respect to $F(S)=\freeproduct_{P\in \Pi} F(P)$.

\begin{df}
A  labelled graph $\Gamma$ satisfies the \emph{$Gr_{\ell}'(\lambda)$ small cancellation condition} for  $\lambda>0$  if
\begin{itemize}
\item the labelling is reduced and
\item every piece $p$ that is subpath of a nontrivial simple closed path $\gamma$ satisfies $\ell(\omega(p))<\lambda\ell(\omega(\gamma))$.
\end{itemize}
Denote by $Gr'(\lambda)$ the $Gr'_\ell(\lambda)$-condition where $\ell$ is the word length. Given a partition of $S$, denote by $Gr'_*(\lambda)$ the $Gr'_\ell(\lambda)$-condition where $\ell$ is the associated free product length.
\end{df}

 If $\Gamma$ satisfies the $Gr_\ell'(\lambda)$-condition for $\lambda\leqslant\frac{1}{p-1}$, then $\Gamma$ satisfies the $Gr(p)$-condition. 

Metric graphical small cancellation with respect to the word length of the free group was first studied in \cite{ollivier_small_2006}. Non-metric graphical small cancellation over free groups was first studied in \cite{dominik}. Metric graphical small cancellation over arbitrary free products was first studied in \cite{markus}.

\begin{prop}\label{ComerfordGraphical}
 Let $p\in\mathbb N$ and $\lambda>0$. Let $\Gamma$ be a graph labelled by a set $S$, and let $H$ be a subgroup of index $h$ (finite or infinite) in $G(\Gamma)$. Then there exist 
 a graph $\Gamma_H$ labelled by $S\times(G(\Gamma)/H)$ such that $G(\Gamma_H)=H*F_{h-1}$, where $F_{h-1}$ is the free group of rank $h-1$, and such that:
 \begin{itemize}
  \item If $\Gamma$ satisfies the $Gr(p)$-condition, then so does $\Gamma_H$.
  \item If $\Gamma$ satisfies the $Gr'(\lambda)$-condition, then so does $\Gamma_H$.
  \item If $\Gamma$ satisfies the $Gr'_*(\lambda)$-condition with respect to $F(S)=\freeproduct_{P\in \Pi}F(P)$, where $\Pi$ is a partition of $S$, then $\Gamma_H$ satisfies the $Gr'_*(\lambda)$-condition with respect to $F(S\times G(\Gamma)/H)=\freeproduct_{P\in \Pi}F\bigl(P\times G(\Gamma)/H\bigr)$.
 \end{itemize}

\end{prop}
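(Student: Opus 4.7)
Following Comerford's approach, the plan is to realize $\Gamma_H$ as a labeled covering of $\Gamma$ and to transfer the small cancellation conditions through the covering projection. I would define $\Gamma_H$ to have vertex set $V(\Gamma)\times G(\Gamma)/H$, placing for each edge of $\Gamma$ from $u$ to $v$ labeled $s\in S$ and each coset $gH$ a single edge from $(u,gH)$ to $(v,gsH)$ with label $(s,gH)\in S\times G(\Gamma)/H$. The forgetful map $\pi\colon\Gamma_H\to\Gamma$ is then a label-preserving combinatorial covering (after projecting the coset coordinate away). The identification $G(\Gamma_H)=H*F_{h-1}$ would follow from the Reidemeister--Schreier process realized geometrically: the presentation of $G(\Gamma_H)$ is the Reidemeister--Schreier rewrite of the presentation of $G(\Gamma)$ for the subgroup $H$, \emph{without} collapsing a spanning tree of the Schreier coset graph; the $h-1$ uncollapsed tree edges produce the additional free factor $F_{h-1}$.

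To transfer the small cancellation conditions, I would use three properties of $\pi$. First, since $\pi$ is an immersion, images of immersed paths of $\Gamma_H$ are immersed paths of $\Gamma$, whose $S$-labels are freely reduced by hypothesis; a word in $F(S\times G(\Gamma)/H)$ is freely reduced if and only if its $S$-projection is, so reduced labeling transfers. Second, any piece $\tilde p$ of $\Gamma_H$ projects to a piece $\pi\tilde p$ of $\Gamma$ of the same combinatorial length: given two essentially distinct immersions $\iota_1,\iota_2$ of $\tilde p$ in $\Gamma_H$, their projections are essentially distinct in $\Gamma$, since any label-preserving automorphism $\phi$ of $\Gamma$ canonically lifts to the label-preserving automorphism $\tilde\phi(v,gH)=(\phi(v),gH)$ of $\Gamma_H$ that would then witness equivalence of $\iota_1,\iota_2$ in $\Gamma_H$. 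Third, all three length functions considered (combinatorial length, word length in $F(S)$, and free product length for the partition $\{P\times G(\Gamma)/H : P\in\Pi\}$) are preserved by $\pi$, since the coset coordinate does not affect them.

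The main obstacle is that a simple closed path $\tilde\gamma$ of $\Gamma_H$ does not in general project to a simple closed path of $\Gamma$: its image is only an immersed non-backtracking closed loop of $\Gamma$, which may revisit vertices. To apply the small cancellation bounds of $\Gamma$, I would therefore analyse such immersed loops and bound the pieces of $\tilde\gamma$ in terms of the pieces of genuine simple closed sub-paths of $\pi\tilde\gamma$ in $\Gamma$. Combined with the preservation properties of $\pi$ above, this yields each of the three bullets of the proposition.
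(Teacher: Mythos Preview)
Your construction of $\Gamma_H$ as the fibre product of $\Gamma$ and the Schreier coset graph over the rose $K$ is correct and coincides with the paper's graph (the paper phrases it as taking, for each coset $v$, the graph $\Gamma$ equipped with the lifted labelling $\omega_v\colon\Gamma\to K_H$, and setting $\Gamma_H=\bigsqcup_v\Gamma_v$). Your argument that pieces of $\Gamma_H$ project to pieces of $\Gamma$ via the lift $\tilde\phi(x,gH)=(\phi(x),gH)$ is also fine: since two label-equal immersions in $\Gamma_H$ must traverse the same sequence of coset-coordinates (the coset is recorded in the label), one checks that $\tilde\phi\circ\iota_1=\iota_2$ whenever $\phi\circ\pi\iota_1=\pi\iota_2$.

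The gap is your ``main obstacle''. It is illusory, and you have not noticed the observation that removes it. Every closed path in $\Gamma$ has label equal to a product of conjugates of defining relators of $G(\Gamma)$, hence represents the identity in $G(\Gamma)$; thus the homomorphism $\pi_1(\Gamma^i)\to G(\Gamma)$ is trivial for every component $\Gamma^i$. Consequently your covering $\pi\colon\Gamma_H\to\Gamma$ is the \emph{trivial} $h$-sheeted covering: each component of $\Gamma_H$ maps isomorphically (as an unlabelled graph) onto a component of $\Gamma$. In particular, simple closed paths of $\Gamma_H$ project to simple closed paths of $\Gamma$, and the transfer of all three small cancellation conditions is immediate. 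The paper makes this transparent by \emph{defining} $\Gamma_v$ to be $\Gamma$ itself with a new labelling, so that the projection $\pi_v\colon\Gamma_v\to\Gamma$ is literally the identity on underlying graphs.

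Without this observation, your proposed workaround (``analyse such immersed loops and bound the pieces of $\tilde\gamma$ in terms of the pieces of genuine simple closed sub-paths of $\pi\tilde\gamma$'') is incomplete for the metric conditions $Gr'(\lambda)$ and $Gr'_*(\lambda)$: a piece in $\pi\tilde\gamma$ need not lie inside any one simple closed sub-path of $\pi\tilde\gamma$, so you cannot directly invoke the hypothesis on $\Gamma$. Recognising that $\pi$ is componentwise an isomorphism is the missing idea that makes the whole proof go through cleanly.
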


\begin{proof} We extend the proof of Comerford \cite{comerford_subgroups_1978}. Denote by $K$ the labelled oriented graph that has a single vertex and for each $s\in S$ a single oriented edge labelled $s$. In each component $\Gamma^i$ of $\Gamma$ fix a basepoint. The labelling of $\Gamma$ by $S$ can be viewed as a basepoint-preserving graph homomorphism
$$\omega:\Gamma\to K.$$ 
A space $X$ with fundamental group $G(\Gamma)$ is obtained by attaching, for each component $\Gamma^i$ of $\Gamma$, the topological cone $C\Gamma^i$ over $\Gamma^i$ onto $K$ along the map $\omega$.

Let $H$ be a subgroup of index $h$ (finite or infinite) in $G$, and denote 
$$S_H:=S\times G/H.$$
For simplicity, we write an ordered pair $(s,v)$ as $s_v$.
We now construct a graph $\Gamma_H$ labelled by $S_H$ such that $G(\Gamma_H)= H*F_{h-1}$.

Let $\pi_H:X_H\to X$ be a connected cover with $\pi_1(X_H)=H$. Then $\pi^{-1}_H(K)$ is a Schreier coset graph of $H\leq G(\Gamma)$, and, in particular, every vertex of $\pi^{-1}_H(K)$ is an element of $G(\Gamma)/H$. The map $\pi^{-1}_H(K)\to K$ is a labelling of $\pi^{-1}_H(K)$. We construct a new labelling of $\pi^{-1}_H(K)$ over $S_H$ as follows: We do not change orientations of edges. We replace the label of every edge starting at a vertex $v$ and labelled by $s$ by the label $s_v$. Denote the resulting labelled graph by $K_H$ and its labelling function~by~$\omega_H$.

Recall that we fixed basepoints in the components $\Gamma^i$ of $\Gamma$ and that the topological cone over each $\Gamma^i$ is simply connected. Thus, for each vertex $v\in K_H$, there exists a graph homomorphism $\omega_v:\Gamma\to K_H$ taking all basepoints to $v$. We interpret this homomorphism as  labelling $\omega_v$ on $\Gamma$. Denote the graph $\Gamma$ with the labelling $\omega_v$ by $\Gamma_v$ and denote
$$\Gamma_H:=\bigsqcup_{v\in G(\Gamma)/H} \Gamma_v.$$

We show that that $G(\Gamma_H) = H*F_{h-1}$: In $X_H$, identify all vertices in $\pi_H^{-1}(K)$ and denote the resulting space by $X_H^*$. We compute the fundamental group of $X_H^*$ as follows: Consider the disjoint union of $X_H$ and a space consisting of a single vertex $b$. Now add edges connecting $b$ to every vertex of $\pi_H^{-1}(K)$. The fundamental group of this space is $H*F_{h-1}$, and it is homotopy equivalent to $X_H^*$. Therefore, $X_H^*$ has fundamental group $H*F_{h-1}$.

Consider $X_H^*$ with the labelling of edges induced from $K_H$. The image of $K_H$ in $X_H^*$ has a single vertex and for each $s_v\in S_H$ a single oriented edge labelled $s_v$. $X_H^*$ is obtained by attaching the topological cone over each component of $\Gamma_H$ along the labelling map. Thus,
$$G(\Gamma_H)=\pi_1( X_H^*)=H*F_{h-1}.$$

The (not label-preserving) maps of labelled graphs $\pi_v:\Gamma_v\to\Gamma$ induced by the identity on the underlying graphs are isometries with respect to the length functions we consider. The labelling of each $\Gamma_v$ is reduced if the labelling of $\Gamma$ is. We show that every piece in $\Gamma_H$ maps to a piece in $\Gamma$ via a map $\pi_v$. Since nontrivial simple closed paths map to nontrivial simple closed paths, this is sufficient to show that $\Gamma_H$ satisfies the claimed small cancellation conditions if $\Gamma$ does.

We start by an observation: Let $e$ be an edge in a component $\Gamma^i$ of $\Gamma$, and let $s = \omega(e)$. Let $v \in G/H$. By the unique lifting property of covering spaces, there exists a unique lift of the map $C\Gamma^i \to X$ (induced by $\omega: \Gamma \to K$) to $X_H$ which sends $e$ to the edge labelled $s_v$, and thus a unique lift of $\Gamma^i \to K$ to $K_H$ sending $e$ to the edge labelled $s_v$.

Now let $\iota_1: p\to\Gamma^i \subset \Gamma$ and $\iota_2:p\to\Gamma^j\subset \Gamma$ be two immersions of a non-trivial path $p$ into $\Gamma$ and $v, w \in G/H$ such that the labellings $\omega_v \circ \iota_1, \omega_w \circ \iota_2 : p \to K_H$ by $S_H$ coincide.  Assume that there exists an $\omega$-preserving automorphism $\phi$ of $\Gamma$ such that $\phi \circ \iota_1=\iota_2$. Let $e$ be an edge of $p$. By construction, the maps $\omega_w\circ\phi$ and $\omega_v$ are two lifts of $\omega: \Gamma^i \to K$ to $K_H$ that coincide on $\iota_1(e)$, hence they are equal by the above observation. Thus, $\phi$ induces an isomorphism from $\Gamma^i$ to $\Gamma^j$ that is compatible with labellings $\omega_v$ of $\Gamma^j$ and $\omega_w$ of $\Gamma^j$. This can be extended to a label-preserving automorphism $\phi_H: \Gamma_H \to \Gamma_H$ by sending $\Gamma^j$ with labelling $\omega_w$ to $\Gamma^i$ with labelling $\omega_v$ by means of $\phi^{-1}$, and by being the identity on every other labelled 
component. Thus, if $\iota_1 : p\to\Gamma_v \subset \Gamma_H$ and $\iota_2: p\to\Gamma_w \subset \Gamma_H$ are two essentially distinct paths in $\Gamma_H$ (with respect to the labelling $\omega_H$) which have the same labels, then $\pi_v\circ\iota_1:p\to\Gamma$ and $\pi_w\circ\iota_2:p\to\Gamma$ are essentially distinct in $\Gamma$ (with respect to the labelling $\omega$). 
\end{proof}

\section{Groups without unique product}

The first  construction of torsion-free groups without the unique product property is due to \cite{rips_torsion-free_1987}.
We present here a generalisation of this  construction, following \cite{markus}, which allows more flexibility in the choice of generators and relators in the presentations under consideration. This will be used to prove our main theorem.

Let $F(S)$ and $F(T)$ be free groups over non-empty distinct sets $S$ and $T$.  We start by constructing a graph $\Gamma$ labelled by $S\sqcup T$ which will be used to define non-unique product groups. This is done in three steps. 

Choose non-trivial cyclically reduced elements $a \in F(S)$ and $b\in F(T)$. Let $N \geqslant 1$ be an integer and choose integers $C_1,\ldots,C_{N} \geqslant 1$. For each $1 \leqslant i \leqslant N$, let $p_i$ be the oriented line graph labelled by $S$ whose label is $a^{C_i}$. Denote by $u_{i,j}$ the terminal vertex of the initial subpath labelled $a^{j}$. Let $p_b$ be the oriented line graph labelled by $T$ whose label is $b$. Denote the initial vertex of $p_b$ by $v_0$ and the terminal vertex by $v_1$.

For every $1 \leqslant i \leqslant N$, we now construct a new graph $p_i'$ out of $p_i$ as follows. Consider $C_i+1$-many copies of $p_b$, denoted $(p_b)_{i, 0}, \ldots, (p_b)_{i, C_i}$.  We construct the graph $p_i'$ from the disjoint union of $p_i$ and the various $(p_b)_{i,j}, 0 \leqslant j \leqslant C_i$, by identifying the vertex $u_{i, j}$ of $p_i$ with the vertex $(v_0)_{i,j}$ of $(p_b)_{i,j}$ for every $0 \leqslant j \leqslant C_i$. Each $p_i'$ naturally comes with a labelling by $S\sqcup T$.

We now define the graph $\Gamma$ from the disjoint union of the labelled graphs $p_i', 1 \leqslant i \leqslant N$ as follows.  For each $1 \leqslant i \leqslant N$, choose  four integers $1\leqslant
N_{i,1},N_{i,2},N_{i,3}$, $N_{i,4}\leqslant N$ and for each $1 \leqslant j \leqslant 4$, an integer  $0\leqslant P_{i,j}\leqslant C_{N_{i,j}}$. We identify the vertex $u_{i,0}$ (respectively $(v_{1})_{i,0}$,
$u_{i,C_i}$, $(v_{1})_{i, C_i}$) with the vertex
$(v_{1})_{N_{i,1},P_{i,1}}$ (respectively $u_{N_{i,2},P_{i,2}}$, $(v_{1})_{N_{i,3},P_{i,3}}$, $u_{N_{i,4},P_{i,4}}$). As before, $\Gamma$ naturally inherits a labelling by $S\sqcup T$. 

\medskip

Note that $\Gamma$ depends on the various choices of $a,b, N, (C_i),  (N_{i,j})$ and $(P_{i,j})$. We will denote it  $\Gamma \big(a,b, N, (C_i), (N_{i,j}), (P_{i,j})\big)$ when emphasising this dependence.

\begin{df}
The graph $\Gamma= \Gamma\big(a,b, N, (C_i), (N_{i,j}), (P_{i,j}) \big)$ is called the \textit{Rips--Segev graph (over $F(S)*F(T)$)} associated to the \textit{coefficient system} $\big(a$, $b$, $N$, $(C_i)$, $(N_{i,j})$, $(P_{i,j}) \big).$
\end{df}

Combinatorial considerations of graphs with large girth yield the following existence result:

\begin{prop}[\cite{markus}] 
For all non-trivial cyclically reduced $a\in F(S)$ and $b\in F(T)$, there exists an explicit choice of coefficients such that the associated Rips--Segev graph is connected and satisfies the $Gr_{*}'(\frac{1}{6})$-condition with respect to the free product length  on $F(S)*F(T)$.\qed \label{mainRipsSegev2}
\end{prop}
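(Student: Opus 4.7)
The plan is to exploit the fact that $a \in F(S)$ and $b \in F(T)$ lie in distinct free factors of $F(S)*F(T)$, so that every immersed path in $\Gamma$ carries a label that is already a reduced alternating word in powers of $a$ and occurrences of $b^{\pm 1}$. This immediately identifies the free product length $\ell(\omega(p))$ of an immersed label with the number of its maximal $a$- and $b$-syllables. I would first verify reducedness: since $a$ and $b$ are cyclically reduced and involve disjoint alphabets $S, T$, no freely non-reduced label can arise inside a single $a$-spine $p_i$ or a single $b$-loop, and at each identification vertex the transition from an $a$-edge to a $b$-edge (or vice versa) crosses between the two free factors, hence produces a genuine syllable boundary rather than a cancellation.

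Next I would classify pieces by their \emph{syllable profile}. A piece contained entirely in a single $a$-spine $p_i$ or a single $b$-loop contributes free product length at most $1$, and each identification vertex traversed by a piece adds at most one syllable. I would encode the gluing pattern as a finite directed multigraph $\mathcal{G}$ on the vertex set $\{1,\dots,N\}$, whose arrows record the four identifications prescribed by the tuples $(N_{i,j}, P_{i,j})$. Two essentially distinct immersions of a path $p$ then yield two distinct walks in $\mathcal{G}$ whose sequences of $a$-exponents $C_i$ and attachment positions $P_{i,j}$ agree edge-by-edge. By choosing the $C_i$ pairwise sufficiently distinct, this matching condition becomes rigid: after a bounded prefix the two walks must coincide, which forces the two immersions to be related by a label-preserving automorphism of $\Gamma$. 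This produces a uniform upper bound $M$ on the free product length of any genuine piece, depending only on the combinatorial parameters of the construction but not on the girth of $\mathcal{G}$.

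Finally, the free product length of any nontrivial simple closed path in $\Gamma$ is bounded below by the length of the corresponding closed walk in $\mathcal{G}$, and hence by the girth of $\mathcal{G}$. Standard constructions of sparse directed multigraphs of arbitrarily large girth with prescribed in- and out-degree sequences (large-girth random or explicit regular graphs) let me choose $N$ and the data $(N_{i,j}, P_{i,j})$ so that $\mathcal{G}$ is connected and has girth exceeding $6M$; combined with the piece bound this yields the $Gr_*'(\tfrac{1}{6})$ condition, and connectedness of $\Gamma$ follows from that of $\mathcal{G}$. The main obstacle, and the technical heart of the argument in \cite{markus}, is establishing the quantitative piece-length bound $M$: one must show that the mismatches forced by the varying $(C_i)$ genuinely terminate all pieces after a uniformly bounded number of identification vertices, so that it is the girth parameter alone, tunable independently, which controls the small cancellation ratio.
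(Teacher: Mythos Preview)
The paper does not prove this proposition: it is quoted from \cite{markus} and closed with a \qed, the only hint at method being the lead-in sentence ``Combinatorial considerations of graphs with large girth yield the following existence result.'' There is therefore no in-paper argument to compare your proposal against.

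Your outline is consistent with that hint and with the strategy in \cite{markus}: encode the identification data by an auxiliary finite graph, show that pieces in $\Gamma$ have uniformly bounded free-product length $M$, then choose the auxiliary graph connected and of girth exceeding $6M$. Two details would need adjustment in a full write-up. First, the r\^oles in your graph $\mathcal{G}$ are inverted relative to the usual set-up: it is the \emph{edges} of the large-girth graph that are indexed by $\{1,\dots,N\}$ and carry the $a$-spines $p_i$, the identification data $(N_{i,j},P_{i,j})$ describing incidences at the vertices; getting this right matters for correctly translating the girth of $\mathcal{G}$ into a lower bound on the free-product length of simple closed paths in $\Gamma$. Second, the pairwise distinctness of the $C_i$ is not what bounds piece length --- a path lying inside a single $a$-spine has free-product length $1$ however long it is --- rather, it is the placement of the $b$-attachments and the positions $P_{i,j}$ that force two essentially distinct immersions with the same label to diverge after a bounded number of $b$-syllables. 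You rightly flag this piece bound as the technical heart, but the mechanism you sketch for it would need to be revised.
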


Consider a connected Rips--Segev graph $\Gamma=\Gamma\big(a,b$, $N$, $(C_i)$, $(N_{i,j})$, $(P_{i,j})\big)$. We now construct non-empty finite subsets of elements of $F(S)*F(T)$. For $1 \leqslant i \leqslant N$, choose a path $\gamma_i$ in $\Gamma$ from $u_{1,0}$ to $u_{i,0}$ and let $w_i$ be the label of $\gamma_i$ in $F(S)*F(T)$. For each $1 \leqslant i \leqslant N$, we define the following subsets of $F(S)*F(T)$:
\[A_i:=\{w_{i}, w_{i}a,w_{i}a^2, \ldots, w_{i}a^{C_i-1}\}.\]
Finally, let
$$A:= \bigcup_{1 \leqslant i \leqslant N} A_i\hspace{12pt}\text{ and }\hspace{12pt}B:= \{1, a, b, ab \}.$$

In presence of graphical small cancellation conditions, the image of $A$ and $B$ in $G(\Gamma)$ define non-empty finite subsets without a unique product. More precisely, we have the following fundamental results about Rips--Segev graphs:

\begin{prop}[\cite{markus}]
Let $\Gamma$ be a finite labelled graph over $F(S)*F(T)$ which is a non-empty disjoint union of connected  Rips--Segev graphs over $F(S)*F(T)$. If $\Gamma$ satisfies the $Gr_{*}'(\frac{1}{6})$-condition with respect to the free product length on $F(S)*F(T)$, then $G(\Gamma)$ is torsion-free hyperbolic and does not have the unique product property. 
\qed
\label{mainRipsSegev}
\end{prop}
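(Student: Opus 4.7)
The proposition has three parts: $G(\Gamma)$ is hyperbolic, torsion-free, and lacks the unique product property. My plan is to deduce the first two from the general theory of graphical small cancellation presentations over free products, and to verify the third by an explicit combinatorial analysis of the distinguished sets $A$ and $B$. Since $\Gamma$ is finite and satisfies $Gr'_*(\tfrac{1}{6})$, a Greendlinger-type lemma over the free product $F(S)*F(T)$ applies to any reduced van Kampen diagram over $\langle S\sqcup T\mid\Gamma\rangle$: in any such diagram of positive area, some face has a boundary arc on the diagram exterior whose free-product length exceeds half of its boundary length. This yields a linear isoperimetric inequality, whence hyperbolicity of $G(\Gamma)$. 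Torsion-freeness then follows from the classification of torsion in these quotients together with the torsion-freeness of $F(S)*F(T)$, after checking from the construction that no label of a simple closed path in a Rips--Segev graph is a proper power.

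For the failure of the unique product property, I would show that each element $c\in A\cdot B\subset G(\Gamma)$ admits at least two decompositions $c=a_1 b_1=a_2 b_2$ with $a_i\in A$, $b_i\in B$. The combinatorial input is the identification pattern prescribed in step three of the construction: each of the four vertices $u_{i,0}$, $u_{i,C_i}$, $(v_1)_{i,0}$, $(v_1)_{i,C_i}$ is glued to a vertex of the form $u_{N,P}$ or $(v_1)_{N,P}$ on some $p_k'$, producing two paths from the basepoint $u_{1,0}$ ending at the same vertex but factorising differently over $A\times B$. A finite case analysis over the four elements of $B=\{1,a,b,ab\}$ and over the endpoints of each $p_i$ then exhausts all products in $A\cdot B$.

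The main obstacle is promoting these coincidences at the level of paths in $\Gamma$ to genuine equalities in $G(\Gamma)$ and, conversely, ruling out unexpected collisions. For this one needs a lifting statement: two reduced paths in $\Gamma$ starting at the same vertex whose labels represent the same element of $G(\Gamma)$ must end at the same vertex and, modulo a label-preserving automorphism of $\Gamma$, agree as paths. This statement, established by minimal van Kampen diagram arguments using the Greendlinger-type lemma supplied by the $Gr'_*(\tfrac{1}{6})$-condition, is the technical heart of the proof. Once it is in place, the four-vertex identifications above yield at least two genuinely distinct decompositions of every $c\in AB$, so $G(\Gamma)$ fails to have the unique product property.
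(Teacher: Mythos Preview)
Your proposal is correct and follows essentially the same route as the paper. The paper phrases the key technical input as ``every component of $\Gamma$ injects into the Cayley graph of $G(\Gamma)$'', which is equivalent to your lifting statement; both are consequences of the $Gr'_*(\tfrac{1}{6})$-theory, and both are used exactly to guarantee that the sets $A$ and $B$ embed in $G(\Gamma)$. One small clarification: promoting path coincidences in $\Gamma$ to equalities in $G(\Gamma)$ is automatic from the definition of $G(\Gamma)$ and needs no lifting statement---the injectivity is only required in the other direction, to ensure that the listed elements of $A$ (and of $B$) are pairwise distinct in the quotient.
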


The proof uses the following arguments: Results on $Gr_*'(\frac{1}{6})$-presentations over free products \cite{markus}, or, alternatively, $Gr(7)$-presentations over free groups \cite{dominik} yield that $G(\Gamma)$ is torsion-free hyperbolic and that every component of $\Gamma$ injects into the Cayley graph of $G(\Gamma)$. Consider a component $\Gamma^i$ of $\Gamma$. Since $\Gamma^i$ injects into the Cayley graph, the sets $A$ and $B$ associated to $\Gamma^i$ inject into $G(\Gamma)$ under the projection $F(S)*F(T) \to G(\Gamma)$. The labelled paths on $\Gamma^i$ give rise to more than one way of writing each element in $AB$ as product of elements of $A$ and $B$, therefore ensuring the non-unique product property. 
 A direct proof that $A$ and $B$ embed can be found in \cite{markus2}, again using the graphical small cancellation over  free products.

We now move to the proof of our main theorem. Fix an integer $k \geqslant 1$. In the above notation let $S:=\{s\}$ and $T:=\{t\}$.  Set $$a:=s^{k!}, b:=t^{k!}.$$ By Proposition \ref{mainRipsSegev2}, we can find coefficients $\big( N, (C_i), (N_{i,j}), (P_{i,j})\big)$ such that the associated Rips--Segev graph $\Gamma:=\Gamma\big(a$, $b$, $N$, $(C_i)$, $(N_{i,j})$, $(P_{i,j})\big)$ is connected and satisfies the $Gr_{*}'(\frac{1}{6})$-condition with respect to the free product length on $F(\{s\})*F(\{t\})$.  We now show that $G:=G(\Gamma)$ is a group for $k$ as claimed in our main theorem.

\begin{lem} Let $Q$ be a 2-generated group of cardinality $h\leqslant k$. Then $G$ admits a surjective homomorphism to $Q$. 
\end{lem}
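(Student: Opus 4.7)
The plan is to exhibit a surjective homomorphism $G\to Q$ by sending the two free generators of the defining presentation of $G$ to a chosen generating pair of $Q$. Writing $Q=\langle x,y\rangle$, consider the map $\phi:F(\{s,t\})\to Q$ defined by $s\mapsto x$, $t\mapsto y$; it is surjective by construction. Since $G=G(\Gamma)=\langle s,t\mid\text{labels of simple closed paths in }\Gamma\rangle$, it suffices to verify that $\phi$ sends the label of every simple closed path in $\Gamma$ to $1_Q$.

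The key combinatorial observation concerns the structure of the Rips--Segev graph $\Gamma$: every edge of $\Gamma$ lies either in some $p_i$ (and is labelled by $s^{\pm 1}$) or in some $(p_b)_{i,j}$ (and is labelled by $t^{\pm 1}$), and the only vertices at which edges of both types meet are the attachment points $u_{i,j}$ together with their identifications with certain vertices $(v_1)_{i',j'}$. By the construction of the graphs $p_i'$ and of $\Gamma$, these meeting vertices lie at the boundaries between consecutive copies of $a=s^{k!}$ along each $p_i$, and at the endpoints of the $b=t^{k!}$-labelled segments $p_b$. Moreover, for any $0<j<C_i$ the vertex $(v_1)_{i,j}$ is a leaf of $\Gamma$, so no simple closed path can enter the corresponding $(p_b)_{i,j}$. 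Hence each maximal $s$-labelled subsegment of a simple closed path lies inside a single $p_i$ and contributes a label of the form $a^{j_2-j_1}$, while each maximal $t$-labelled subsegment traverses a sequence of full $p_b$-copies in a coherent direction and contributes a label of the form $b^{m}$. Therefore the label of every simple closed path of $\Gamma$ is an alternating product, in $F(S)*F(T)$, of powers of $a$ and of $b$.

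The remainder of the proof is an application of Lagrange's theorem. Since $|Q|=h\leqslant k$, the order of every element of $Q$ divides $|Q|$, and hence divides $k!$. In particular $\phi(a)=x^{k!}=1_Q$ and $\phi(b)=y^{k!}=1_Q$, so $\phi$ annihilates every relator of $G$ and factors through a surjective homomorphism $G\to Q$.

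The main obstacle is precisely the combinatorial claim that relator labels are alternating products of powers of $a$ and $b$. Establishing it requires a careful analysis of the attachment pattern in the Rips--Segev construction in order to rule out $s$-to-$t$ transitions at non-boundary vertices and to exclude the leaves $(v_1)_{i,j}$ with $0<j<C_i$ from simple closed traversals. Once this structural fact is in hand, the choices $a=s^{k!}$ and $b=t^{k!}$ combined with Lagrange's theorem render the surjectivity of the induced map automatic.
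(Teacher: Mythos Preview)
Your approach is exactly the paper's: send $s,t$ to a generating pair of $Q$, observe that every relator is a product of powers of $a=s^{k!}$ and $b=t^{k!}$, and invoke Lagrange's theorem. The paper states the structural fact about relators in one line (``by construction''), whereas you give a more detailed justification.

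That justification, however, contains an inaccuracy. The claim that $(v_1)_{i,j}$ is a leaf of $\Gamma$ for $0<j<C_i$ is false: in the final gluing step, the identification $u_{i',0}\sim (v_1)_{N_{i',1},P_{i',1}}$ (and likewise for $N_{i',3},P_{i',3}$) permits $P_{i',1}$ to be any value in $\{0,\ldots,C_{N_{i',1}}\}$, so an interior $(v_1)$-vertex can be glued to an endpoint of some other $p_{i'}$ and then carries $s$-edges. For the same reason, chains of identifications can make one vertex a $u$-vertex for several distinct $p_i$'s, so a maximal $s$-subpath need not lie inside a single $p_i$.

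Your conclusion nevertheless holds, and a cleaner route is this: every interior vertex of each $a$-block (the subpath of $p_i$ from $u_{i,j}$ to $u_{i,j+1}$) and of each $b$-block $(p_b)_{i,j}$ has degree $2$ in $\Gamma$, because all identifications involve only the $u$- and $(v_1)$-vertices. Hence any simple closed path entering such a block traverses it completely, and its label is a word in $a^{\pm1},b^{\pm1}$; the rest of your argument then goes through unchanged.
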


\begin{proof} Let $\{s',t'\}$ be a generating set for $Q$. Since $Q$ has cardinality $h$, $s'$ and $t'$ both have order dividing $k!$. By construction, every defining relator of $G$ (that is, every label of a cycle of $\Gamma$) is a product of powers of $s^{k!}$ and $t^{k!}$. Thus, the surjective map $F(\{s\}) *F(\{t\})\to Q$ sending $s$ and $t$ to $s'$ and $t'$ respectively, sends the defining relators of $G$ to the identity. This yields a surjective homomorphism $G\to Q$. 
\end{proof}

\begin{proof}[Proof of the main theorem]
Let $h\leqslant k$ and $H$ a subgroup of $G$ of index $h$. We use the same notations as in the proof of Proposition \ref{ComerfordGraphical}. Recall that $\Gamma_H = \bigsqcup_{v \in G/H} \Gamma_v$, where $ G/H$ is the set of vertices of $K_H$, and each $\Gamma_{v}$ is isomorphic to $\Gamma$ as an unlabelled oriented graph.

\begin{figure}
\includegraphics{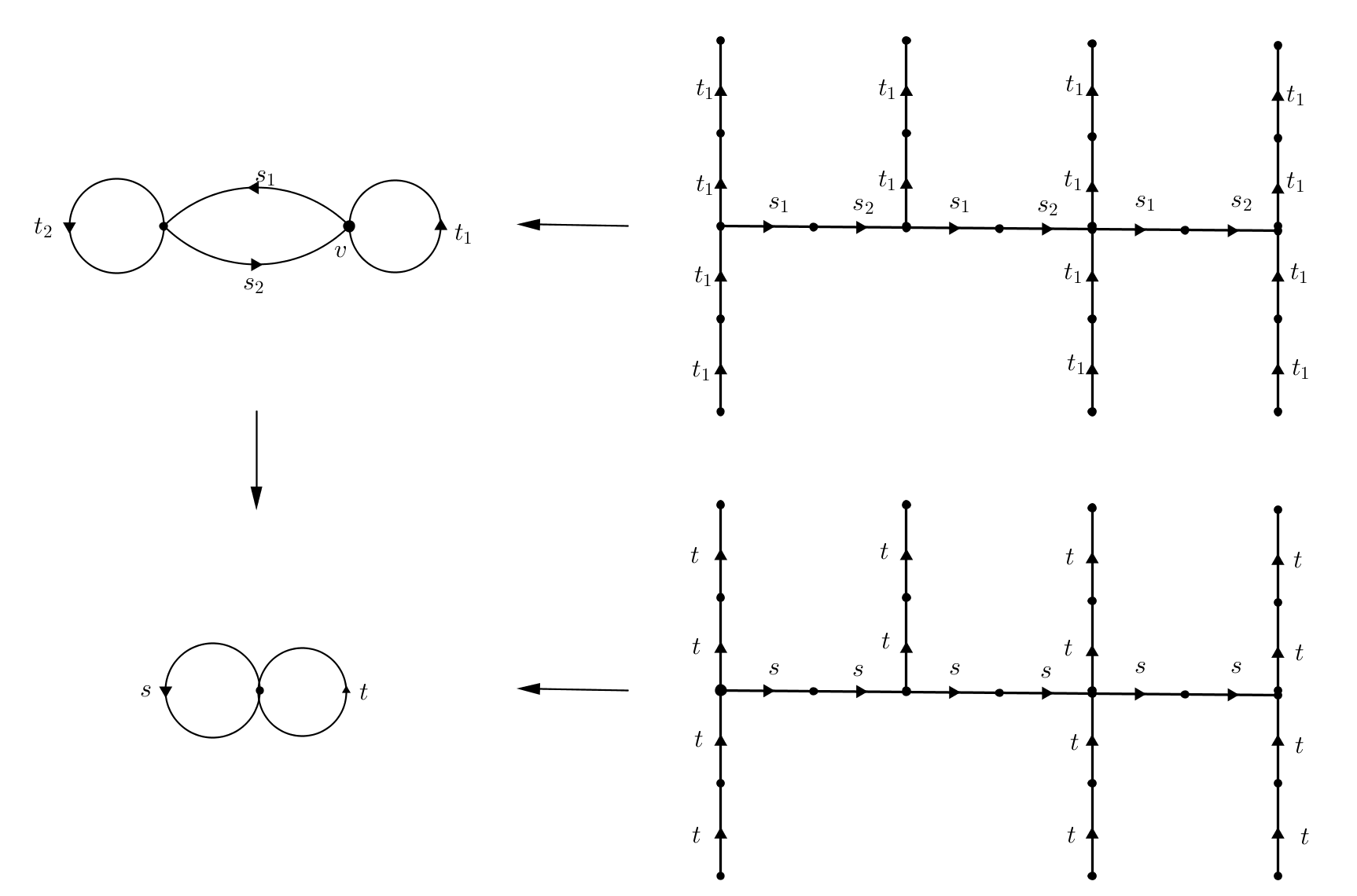}
\caption{The situation for an index 2 subgroup in the case $a=s^2$, $b=t^2$. 
Upper left: $K_H$, upper right: a
part of $\Gamma_v\subseteq\Gamma_H$, lower left: $K$, lower right: a part of $\Gamma$.}
\label{figure}
\end{figure}

For each $v \in G/H$, the connected component of the preimage under $\pi_H: K_H \rightarrow K$ of the oriented edge labelled $s$ (respectively $t$) containing $v$ is an oriented cycle $\alpha_v$ (respectively $\beta_v$) labelled by  $\{s\} \times G/H$ (respectively $\{t\} \times G/H$) of length at most $k$ (see Figure \ref{figure}). Define $$a_v:=\omega_H(\alpha_v)^{k!/|\alpha_v|}\text{ and }b_v:=\omega_H(\beta_v)^{k!/|\beta_v|};$$ here $|\, .\, |$ denotes the edge-length of paths in $\Gamma$ or $\Gamma_H$ respectively.

Thus, the map of labelled graphs $\Gamma \rightarrow \Gamma_{v}$ induced by the identity on the underlying graph sends every path of $\Gamma$ with label $a$ that starts at some $u_{i,j}$ to a path of $\Gamma_{v}$ with label $a_v$, and every path with label $b$ starting at some $(v_0)_{i,j}$ to a path of $\Gamma_{v}$ with label~$b_v$. Therefore, the graph $\Gamma_{v}$ is the Rips--Segev graph over $F( \{s\} \times G/H ) * F( \{t\} \times G/H)$ with coefficient system $\big(a_v, b_v, N, (C_i), (N_{i,j}), (P_{i,j}) \big)$.

By Proposition \ref{ComerfordGraphical}, the labelling of  $\Gamma_H=\bigsqcup_v \Gamma_v$ satisfies the $Gr_{*}'(\frac{1}{6})$-condition with respect to the free product length on $F( \{s\} \times G/H ) * F( \{t\} \times G/H)$. 
Thus, $G(\Gamma_H)=H*F_{h-1}$ does not satisfy the unique product property by Proposition \ref{mainRipsSegev}. As the unique product property is stable under free products, it follows from the fact that free groups are unique product groups that $H$ does not have the unique product property.
\end{proof}

\begin{bibdiv}

\begin{biblist}

\bib{markus2}{article}{
   author={Arzhantseva, G},
   author={Steenbock, M.},
TITLE = {Rips construction without unique product},
eprint = {arXiv:1407.2441},
 YEAR = {2014},
 }

 \bib{cohen}{article}{
	author = {Cohen, J. M.},
	title = {Zero divisors in group rings},
	journal = {Comm. Algebra},
	volume = {2},
	year = {1974},
	pages = {1--14},
}

\bib{comerford_subgroups_1978}{article}{
   author={Comerford, L., Jr.},
   title={Subgroups of small cancellation groups},
   journal={J. London Math. Soc. (2)},
   volume={17},
   date={1978},
   number={3},
   pages={422--424},
} 

\bib{delzant_sur_1997}{article}{
 author={Delzant, T.},
   title={Sur l'anneau d'un groupe hyperbolique},
   journal={C. R. Acad. Sci. Paris S\'er. I Math.},
   volume={324},
   date={1997},
   number={4},
   pages={381--384},
 }

\bib{dominik}{article}{
   author={Gruber, D.},
TITLE = {Groups with graphical $C(6)$ and $C(7)$ small cancellation presentations},
journal={Trans. Amer. Math. Soc., in press},
   date={2012},
eprint = {arXiv:1210.0178},
 }

 \bib{kaplansky}{book}{
	author = {Kaplansky, I.},
	publisher = {National Academy of Sciences-National Research Council, Washington, Publ. 502},
	title = {Problems in the theory of rings. Report of a conference on linear algebras, June, 1956, pp. 1-3},
	date = {1957},
}
 
 \bib{lyndon_combinatorial_1977}{book}{
  author={Lyndon, R. C.},
   author={Schupp, P. E.},
   title={Combinatorial group theory},
   note={Ergebnisse der Mathematik und ihrer Grenzgebiete, Band 89},
   publisher={Springer-Verlag},
   place={Berlin},
   date={1977},
   pages={xiv+339},
}
 
 \bib{ollivier_small_2006}{article}{
   author={Ollivier, Y.},
   title={On a small cancellation theorem of Gromov},
   journal={Bull. Belg. Math. Soc. Simon Stevin},
   volume={13},
   date={2006},
   number={1},
   pages={75--89},
}

 \bib{rips_subgroups_1982}{article}{
   author={Rips, E.},
   title={Subgroups of small cancellation groups},
   journal={Bull. London Math. Soc.},
   volume={14},
   date={1982},
   number={1},
   pages={45--47},
}
 
\bib{rips_torsion-free_1987}{article}{
 author={Rips, E.},
   author={Segev, Y.},
   title={Torsion-free group without unique product property},
   journal={J. Algebra},
   volume={108},
   date={1987},
   number={1},
   pages={116--126},
   issn={0021-8693},
}

\bib{markus}{article}{
   author={Steenbock, M.},
TITLE = {Rips-Segev torsion-free groups without unique product},
eprint = {arXiv:1307.0981},
 YEAR = {2013},
 }

\end{biblist}

\end{bibdiv}
 
\end{document}